\newtheorem{thm}{Theorem}
\newtheorem{lemma}[thm]{Lemma}
\newtheorem{prop}[thm]{Proposition}
\theoremstyle{remark}
\theoremstyle{definition}
\newcommand{\subalign}[1]{%
  \vcenter{%
    \Let@ \restore@math@cr \default@tag
    \baselineskip\fontdimen10 \scriptfont\tw@
    \advance\baselineskip\fontdimen12 \scriptfont\tw@
    \lineskip\thr@@\fontdimen8 \scriptfont\thr@@
    \lineskiplimit\lineskip
    \ialign{\hfil$\m@th\scriptstyle##$&$\m@th\scriptstyle{}##$\crcr
      #1\crcr
    }%
  }
}
\def\ZZ{\mathbb{Z}}
\def\QQ{\mathbb{Q}}
\def\FF{\mathbb{F}}
\def\QQ{\mathbb{Q}}
\def\OO{\mathcal{O}}
\def\p{\mathfrak{p}}
\def\z{\zeta}
\def\gal{\mathrm{Gal}}
\def\aff{\mathrm{Aff}}
\newcommand{\m}{\scalebox{0.6}[1.0]{\( - \)}}
\begin{document}

\title{Collusions in Teichm\"{u}ller expansions}

\author{Trevor Hyde}
\address{Dept. of Mathematics\\
University of Michigan \\
Ann Arbor, MI 48109-1043\\
}
\email{tghyde@umich.edu}

\date{April 25th, 2017}

\maketitle

\begin{abstract}
If $\p \subseteq \ZZ[\z]$ is a prime ideal over $p$ in the $(p^d - 1)$th cyclotomic extension of $\ZZ$, then every element $\alpha$ of the completion $\ZZ[\z]_\p$ has a unique expansion as a power series in $p$ with coefficients in $\mu_{p^d -1} \cup \{0\}$ called the \emph{Teichm\"{u}ller expansion} of $\alpha$ at $\p$. We observe three peculiar and seemingly unrelated patterns that frequently appear in the computation of Teichm\"{u}ller expansions, then develop a unifying theory to explain these patterns in terms of the dynamics of an affine group action on $\ZZ[\z]$.
\end{abstract}

\section{Introduction}

Let $p$ be a prime, $q = p^d$ a power of $p$, and let $\z$ be a primitive $(q-1)$th root of unity. For any prime ideal $\p \subseteq \ZZ[\z]$ over $p$, we have an isomorphism $\ZZ[\z]/\p \cong \FF_q$. Let $\rho: \ZZ[\z] \rightarrow \FF_q$ be the reduction modulo $\p$ map. We call a section $\widetilde{\tau}: \FF_q \rightarrow \ZZ[\z]$ of $\rho$ a \emph{lift}. Given a lift $\widetilde{\tau}$, there is a unique way to expand any element of the completion $\ZZ[\z]_\p$ as a power series in $p$ with coefficients in $\widetilde{\tau}(\FF_q)$.

While there are many lifts of $\rho$, there is a unique \emph{multiplicative} lift $\tau$ called the \emph{Teichm\"{u}ller lift} \cite[Chp. XII, Exer. 16]{lang_alg}. Let $\mu_{q-1}$ denote the multiplicative group of $(q-1)$th roots of unity. Since $q - 1$ is coprime to $p$, the reduction map $\rho$ restricts to an injective group homomorphism $\rho: \mu_{q-1} \rightarrow \FF_q^\times$. Extending the restriction to include 0 we see that $\rho : \mu_{q-1} \cup\{0\} \rightarrow \FF_q$ is an isomorphism of multiplicative monoids. The Teichm\"{u}ller lift $\tau$ is defined as the inverse of this isomorphism, hence is multiplicative. Because $\tau$ is the unique multiplicative lift, $\tau(\FF_q)$ can be seen as a canonical set of coefficients with which to expand elements of $\ZZ[\z]_\p$. We refer to the expansion of an element $\alpha \in \ZZ[\z]_\p$ with respect to $\tau$ as the \emph{Teichm\"{u}ller expansion} of $\alpha$.

The extension of complete local rings $\ZZ[\z]/\ZZ_p$ has degree $d$ where $q = p^d$ and is unramified. There is a unique unramified extension of $\ZZ_p$ of each degree $d$ up to isomorphism \cite[Thm. 3]{Serre}, hence $\ZZ[\z]_\p$ provides a model. Teichm\"{u}ller expansions give a formal way to construct this unique extension of $\ZZ_p$ in terms of $\FF_q$ alone; these are the $p$-typical Witt vectors $W_p(\FF_q)$ \cite[Chp. 2 \S 6]{Serre}. More generally, Teichm\"{u}ller lifts are essential to the construction of the big Witt vectors $W(A)$ for any commutative ring $A$ \cite{witt}. Thus one reason to be interested in Teichm\"{u}ller expansions is to understand the ring structure of Witt vectors $W_p(\FF_q) \cong \ZZ[\z]_\p$. The elements of $\ZZ[\z]_\p$ may be written as power series in $p$ with coefficients in $\tau(\FF_q)$. Our coefficients are closed under multiplication--this is the characteristic property of the Teichm\"{u}ller lift $\tau$--but are not closed under addition. The additive structure is complicated by ``carrying.'' Hence we need to compute the Teichm\"{u}ller expansions of $\z^a + \z^b$ in order to do arithmetic in $\ZZ[\z]_\p$. 

Teichm\"{u}ller expansions are laborious to compute by hand and in the case $q = p$ are less convenient than the usual expression of elements as power series in $p$ with integral coefficients $0\leq i < p$. The difficulty is circumvented with the help of a machine. Peculiar patterns frequently arise as one computes Teichm\"{u}ller expansions. In this paper we observe three seemingly unrelated phenomena and develop a unifying theory to explain them in terms of the dynamics of an affine group action on the global ring $\ZZ[\z]$.

We collect our observations in Section \ref{observe}, followed by a review of cyclotomy in Section \ref{background}. We obtain results in Section \ref{theory} and apply them to explain our examples in Section \ref{examples}.

\section{Observations}
\label{observe}
Recall that $p$ is a prime, $q = p^d$ is a power of $p$, and  $\z$ is a primitive $(q -1)$th root of unity. Given an element $\alpha \in \ZZ[\z]$ and a prime ideal $\p \subseteq \ZZ[\z]$ over $p$, the \emph{Teichm\"{u}ller expansion of $\alpha$ at $\p$} is the unique series
\[
    \alpha = \tau(0,\alpha,\p) + \tau(1,\alpha,\p)p + \tau(2,\alpha,\p)p^2 + \ldots = \sum_{m\geq 0}\tau(m,\alpha,\p)p^m \in \ZZ[\z]_\p,
\]
such that $\tau(m,\alpha, \p) \in \tau(\FF_q) = \mu_{q-1}\cup\{0\}$ for each $m\geq 0$. The $\tau(m,\alpha,\p)$ are called \emph{Teichm\"{u}ller coefficients of $\alpha$ at $\p$}.

To compute explicit Teichm\"{u}ller expansions we must first choose a prime $\p$ over $p$ in $\ZZ[\z]$. The Kummer-Dedekind theorem \cite[Chp 1, 8.3]{neu} says that
\[ 
    \p = (p, f(\zeta)) \subseteq \ZZ[\z]
\]
where $f(x) \in \ZZ[x]$ is congruent modulo $p$ to an irreducible factor of the cyclotomic polynomial $\Phi_{q-1}(x)$ in $\ZZ_p[x]$. There are $\varphi(q-1)/d$ such irreducible factors of degree $d$.

For the moment, let $q = p^d = 2^4$ and let $\z$ be a $q - 1 = 15$th root of unity. The cyclotomic polynomial
\[
    \Phi_{15}(x) = x^8 - x^7 + x^5 - x^4 + x^3 - x + 1
\]
factors into a product of $\varphi(q-1)/d = \varphi(15)/4 = 2$ degree 4 irreducible polynomials over $\ZZ_2$ whose reductions modulo 2 are:
\begin{align*}
    f_1(x) &\equiv x^4 + \,x \,\,+ 1\bmod 2\\
    f_2(x) &\equiv x^4 + x^3 + 1\bmod 2.
\end{align*}
Let $\p_i := (2, f_i(\z))$.

Below are examples of Teichm\"{u}ller expansions at $\p_1$ of sums of two distinct roots of unity.
\begin{align*}
    \z^0 + \z^1 \,\,&= \z^4 + \z^8 p + \,\,\z^6 p^2 + \,\,\,\z^5p^3 + \,\,\z^3 p^4 + 0\, p^5 + \z^8 p^6 + \z^{10} p^7 + \,\,\z^7 p^8 + \z^{10} p^9 + \ldots\\
    \z^1 + \z^3 \,\,&=\z^9 + \z^2 p + \z^{13} p^2 + \z^{11} p^3 + \,\,\z^7 p^4 + 0\, p^5 + \z^2 p^6 + \,\,\z^{6} p^7 + \,\,\z^0 p^8 + \,\,\z^{6} p^9 + \ldots\\
    \z^2 + \z^{10} &= \z^4 + \z^6 p + \,\,\z^5 p^2 + \z^{12} p^3 + \z^{11} p^4 + 0\, p^5 + \z^6 p^6 + \,\,\z^{7} p^7 + \z^{13} p^8 + \,\,\z^{7} p^9 + \ldots\\
    \z^3 + \z^7 \,\,&= \z^{4} + \z^5 p + \z^{12} p^2 + \,\,\z^{8} p^3 + \,\,\z^0 p^4 + 0\, p^5 + \z^5 p^6 + \z^{13} p^7 + \,\,\z^1 p^8 + \z^{13} p^9 + \ldots
\end{align*}
No apparent patterns emerge in the sequence of Teichm\"{u}ller coefficients for an individual expansion. However, we do see some striking relationships between the expansions of different elements. First notice the conspicuous $0$ appearing as the coefficient of $p^5$ in each expansion. Continuing the expansions this phenomenon persists:
\begin{align*}
    \z^0 + \z^1\,\, &= \ldots \,\,\z^2 p^{10} + \z^{11} p^{11} + \mathbf{0}\, p^{12} + \,\,\z^1 p^{13} + \z^{12} p^{14} + \,\,\z^7 p^{15} + \mathbf{0}\, p^{16} + \z^{14} p^{17} + \,\,\z^2 p^{18} + \ldots\\
    \z^1 + \z^3\,\, &= \ldots \,\,\z^5 p^{10} + \,\,\z^8 p^{11}+ \mathbf{0}\, p^{12} + \,\,\z^3 p^{13} + \z^{10} p^{14} + \,\,\z^0 p^{15} + \mathbf{0}\, p^{16} + \z^{14} p^{17} + \,\,\z^5 p^{18} + \ldots\\
    \z^2 + \z^{10} &= \ldots \,\,\z^3 p^{10} + \,\,\z^0 p^{11} + \mathbf{0}\, p^{12} + \z^{10} p^{13} + \,\,\z^8 p^{14} + \z^{13} p^{15} + \mathbf{0}\, p^{16} + \,\,\z^9 p^{17} + \,\,\z^3 p^{18} + \ldots\\
    \z^3 + \z^7\,\, &= \ldots \z^{11} p^{10} + \,\,\z^{2} p^{11} + \mathbf{0}\, p^{12} + \,\,\z^7 p^{13} + \,\,\z^{6} p^{14} +\,\,\z^1 p^{15} + \mathbf{0}\, p^{16} + \z^{14} p^{17} + \z^{11} p^{18}+ \ldots
\end{align*}
Looking closer we see the coefficients of $p^7$ and $p^9$ match in each expansion
\begin{align*}
    \z^0 + \z^1\,\, &= \ldots \z^{10} p^7 + \,\,\z^7 p^8 + \z^{10} p^9 + \ldots\\
    \z^1+ \z^3 \,\,&= \ldots \,\,\z^{6} p^7 + \,\,\z^0 p^8 + \,\,\z^{6} p^9 + \ldots\\
    \z^2 + \z^{10} &= \ldots \,\,\z^{7} p^7 + \z^{13} p^8 + \,\,\z^{7} p^9 + \ldots\\
    \z^3 + \z^7 \,\,&= \ldots \z^{13} p^7 + \,\,\z^1 p^8 + \z^{13} p^9 + \ldots
\end{align*}
 suggesting these expansions may actually be the same under a permutation of the coefficients. The table below supports this claim, showing distributions of the 16 digits in each of the four expansions up to 500 terms.

\begin{center}
\begin{tabular}{|c|r|r|r|r|r|r|r|r|r|r|r|r|r|r|r|r|}
\hline
 \rule{0pt}{1.1em}& $0$ & $\z^0$ & $\z^1$ & $\z^2$ & $\z^3$ & $\z^4$ & $\z^5$ & $\z^6$ & $\z^7$ & $\z^8$ & $\z^9$ & $\z^{10}$ & $\z^{11}$ & $\z^{12}$ & $\z^{13}$ & $\z^{14}$\\ \hline
 \rule{0pt}{1.1em}$\z^0 + \z^1$\,\, & 5.4 & 7.6 & 7.0 & 6.2 & 5.6 & 5.6 & 7.4 & 5.2 & 5.0 & 5.4 & 5.8 & 8.4 & 7.8 & 4.4 & 6.8 & 6.4\\
 $\z^1 + \z^3$\,\, & 5.4 & 5.0 & 7.6 & 5.4 & 7.0 & 5.8 & 6.2 & 8.4 & 5.6 & 7.8 & 5.6 & 4.4 & 7.4 & 6.8 & 5.2 & 6.4\\
 $\z^2 + \z^{10}$ & 5.4 & 7.8 & 6.8 & 7.6 & 6.2 & 5.6 & 5.2 & 5.4 & 8.4 & 4.4 & 6.4 & 7.0 & 5.6 & 7.4& 5.0 & 5.8\\
 $\z^3 + \z^7$\,\, & 5.4 & 5.6 & 5.0 & 7.8 & 7.6 & 5.6 & 5.4 & 4.4 & 7.0 & 7.4 & 5.8 & 6.8 & 6.2 & 5.2 & 8.4 & 6.4\\
 \hline
\end{tabular}
\end{center}

\noindent The rows are in fact permutations of one another with enough distinct entries to almost determine the bijection between them. Notice that the permutations appear to fix zero. We call this phenomenon the \textbf{permutation conspiracy}: seemingly unrelated elements of $\ZZ[\z]$ having the same Teichm\"{u}ller expansion up to a permutation of the coefficients fixing zero. We explain the permutation conspiracy in Section \ref{examples}.

Not every Teichm\"{u}ller expansion of $\z^a + \z^b$ at $\p_1$ is a permutation of one seen above. Here are examples of periodic expansions:
\begin{align*}
    \z^1 + \z^6 \,\,&= \z^{11} + \z^{11} p + \z^{11} p^2 + \z^{11} p^3 + \z^{11} p^4 + \z^{11} p^5 + \z^{11} p^6 + \z^{11} p^7 + \z^{11} p^8 + \z^{11} p^9 + \ldots\\
    \z^4 + \z^{14} &= \,\,\z^9 + \,\,\,\z^9 p + \,\,\z^9 p^2 + \,\,\z^9 p^3 + \,\,\,\z^9 p^4 + \,\,\z^9 p^5 + \,\,\z^9 p^6 + \,\,\,\z^9 p^7 + \,\,\z^9 p^8 + \,\,\z^9 p^9 + \ldots
\end{align*}

\noindent Note that the exponents on the left hand side differ by a multiple of 5 in each case. 

The following expansions are related by a permutation conspiracy but also have \textbf{restricted coefficients} taken from the set $\{0, \z^4, \z^9, \z^{14}\}$.
\begin{align*}
    \z^0 + \z^{3} \,\,&= \z^{14} + \,\,\z^9 p + \z^4 p^2 + \,\,\z^{9} p^3 + \z^{14} p^4 + 0\, p^5 + 0\, p^6 + \,\,\z^{9} p^7 + \z^{14} p^8 + 0\, p^9 + \ldots\\
    \z^2 + \z^{11} &= \,\,\z^{9} + \z^{14} p + \z^4 p^2 + \z^{14} p^3 + \,\,\z^{9} p^4 + 0\, p^5 + 0\, p^6 + \z^{14} p^7 + \,\,\z^{9} p^8 + 0\, p^9 + \ldots\\
    \z^1 + \z^{7} \,\,&= \z^{14} + \,\,\z^{4} p + \z^9 p^2 + \,\,\z^{4} p^3 + \z^{14} p^4 + 0\, p^5 + 0\, p^6 + \,\,\z^{4} p^7 + \z^{14} p^8 + 0\, p^9 + \ldots
\end{align*}

\noindent The exponents on the left hand side differ by multiples of 3 in each case.

How do we account for these special expansions with periodic or restricted coefficients? Can we predict when such phenomena will occur and what coefficients will appear? An affirmative answer is provided in Section \ref{examples}.

Still working with $q = 16$, we now compare the Teichm\"{u}ller expansions of an element at both of the primes $\p_1, \p_2$ over $p$.
\begin{align*}
    \p_1: \z^0 + \z^1 \,\,&= \,\,\z^4 + \,\,\z^8 p + \,\,\,\z^6 p^2 + \,\,\z^5 p^3 + \,\,\z^3 p^4 + 0\, p^5 + \z^8 p^6 + \z^{10} p^7 + \z^7 p^8 + \z^{10} p^9 + \ldots\\
    \p_2: \z^0 + \z^1 \,\,&= \z^{12} + \,\,\z^8 p + \z^{10} p^2 + \z^{11} p^3 + \z^{13} p^4 + 0\, p^5 + \z^8 p^6 + \,\,\z^{6} p^7 + \z^9 p^8 + \,\,\z^{6} p^9 + \ldots\\
    &\\
    \p_1: \z^4 + \z^{14} &=\,\,\z^9 + \,\,\z^9 p + \,\,\z^9 p^2 + \,\,\z^9 p^3 + \,\,\z^9 p^4 + \z^9 p^5 + \z^9 p^6 + \,\,\z^9 p^7 + \z^9 p^8 + \,\,\z^9 p^9 + \ldots\\
    \p_2: \z^4 + \z^{14} &=\,\,\z^9 + \,\,\z^9 p + \,\,\z^9 p^2 + \,\,\z^9 p^3 + \,\,\z^9 p^4 + \z^9 p^5 + \z^9 p^6 + \,\,\z^9 p^7 + \z^9 p^8 + \,\,\z^9 p^9 + \ldots\\
    &\\
    \p_1: \z^2 + \z^{11} &= \,\,\z^{9} + \z^{14} p + \,\,\z^4 p^2 + \z^{14} p^3 + \,\,\z^{9} p^4 + 0\, p^5 + \,\,0\, p^6 + \z^{14} p^7 + \z^{9} p^8 + \,\,\,\,0\, p^9 + \ldots\\
    \p_2: \z^2 + \z^{11} &= \,\,\z^{4} + \z^{14} p + \,\,\z^9 p^2 + \z^{14} p^3 + \,\,\z^{4} p^4 + 0\, p^5 + \,\,0\, p^6 + \z^{14} p^7 + \z^{4} p^8 + \,\,\,\,0\, p^9 + \ldots
\end{align*}
In each example, the product $\tau(m,\alpha,\p_1)\tau(m,\alpha,\p_2)$ is independent of $m$ whenever its nonzero. The values of the products are $\z^1, \z^3, \z^{13}$ respectively (recall that $\z$ is a 15th root of unity.) We refer to this relationship between the Teichm\"{u}ller coefficients of $\alpha$ at different primes as \textbf{prime collusion}. 

To get a better sense of prime collusion, let us consider examples when $q = p^d = 2^6$. Then $\zeta$ is a 63rd root of unity. The polynomial $\Phi_{63}(x)$ factors into $\varphi(63)/6 = 6$ degree 6 irreducible polynomials in $\ZZ_2[x]$.
\begin{center}
\begin{tabular}{rcrcrcr}
     $g_1(x)$&$\equiv$& $x^6 + x^5 + x^4 + x + 1 \bmod 2 $& \hspace{2em} & $g_2(x)$&$\equiv$&$ x^6 + x + 1 \bmod 2$\\
     $g_3(x)$&$\equiv$&$ x^6 + x^5 + x^3 + x^2 + 1 \bmod 2 $& \hspace{2em} & $g_4(x)$&$\equiv$&$ x^6 + x^4 + x^3 + x + 1 \bmod 2$\\
     $g_5(x)$&$\equiv$&$ x^6 + x^5 + 1 \bmod 2$ & \hspace{2em} & $g_6(x)$&$\equiv$&$ x^6 + x^5 + x^2 + x + 1 \bmod 2$
\end{tabular}
\end{center}
Let $\p_i = \big(2, g_i(\zeta)\big)$. Each element has 6 expansions, for instance:
\begin{align*}
    \p_1: \z^0 + \z^1 &=\z^{39} + \z^{32} p + \,\,\z^{4} p^2 + \z^{53} p^3 + \z^{35} p^4 + \,\,\z^{2} p^5 + \,\,\z^{2} p^6 + \z^{44} p^7 + \z^{39} p^8 + \,\,\z^{2} p^9 + \ldots\\
    \p_6: \z^0 + \z^1 &=\z^{25} + \z^{32} p + \z^{60} p^2 + \z^{11} p^3 + \z^{29} p^4 + \z^{62} p^5 + \z^{62} p^6 + \z^{20} p^7 + \z^{25} p^8 + \z^{62} p^9 + \ldots\\
    &\\
    \p_2: \z^0 + \z^1 &=\,\,\z^{6} + \z^{32} p + \z^{19} p^2 + \z^{44} p^3 + \,\,\z^{1} p^4 + \z^{11} p^5 + \z^{24} p^6 + \z^{29} p^7 + \z^{20} p^8 + \z^{16} p^9 + \ldots\\
    \p_5: \z^0 + \z^1 &=\z^{58} + \z^{32} p + \z^{45} p^2 + \z^{20} p^3 + \,\,\z^{0} p^4 + \z^{53} p^5 + \z^{40} p^6 + \z^{35} p^7 + \z^{44} p^8 + \z^{48} p^9 + \ldots\\
    &\\
    \p_3: \z^0 + \z^1 &=\,\,\z^{8} + \z^{32} p + \z^{20} p^2 + \z^{14} p^3 + \z^{51} p^4 + \z^{20} p^5 + \z^{40} p^6 + \z^{37} p^7 + \z^{40} p^8 + \z^{45} p^9 + \ldots\\
    \p_4: \z^0 + \z^1 &=\z^{56} + \z^{32} p + \z^{44} p^2 + \z^{50} p^3 + \z^{13} p^4 + \z^{44} p^5 + \z^{24} p^6 + \z^{27} p^7 + \z^{24} p^8 + \z^{19} p^9 + \ldots
\end{align*}
Again we have a constant product:
\[
    \prod_{1\leq i\leq 6} \tau(m,\z^0+\z^1,\p_i) = \z^3
\]
for all $m\geq 0$ when the product is nonzero. However, observe that for $i = 1, 2, 3$,
\[
    \tau(m,\z^0+\z^1,\p_i)\tau(m,\z^0+\z^1,\p_{7-i}) = \z^1.
\]
These pairwise products refine the collusion noted between all 6 primes. This may lead us to expect collusions to come in pairs, but the next example shows collusion in triples of primes:
\begin{align*}
    \p_1: \z^4 + \z^{37} + \z^{43}  &=\z^{56} + \z^{35} p + \z^{56} p^2 + \z^{14} p^3 + \z^{14} p^4 + \z^{35} p^5 + \z^{35} p^6 + \z^{56} p^7 + \z^{14} p^8 + \ldots\\
    \p_2: \z^4 + \z^{37} + \z^{43}  &=\z^{35} + \z^{14} p + \z^{35} p^2 + \z^{56} p^3 + \z^{56} p^4 + \z^{14} p^5 + \z^{14} p^6 + \z^{35} p^7 + \z^{56} p^8 + \ldots\\
    \p_3: \z^4 + \z^{37} + \z^{43}  &=\z^{14} + \z^{56} p + \z^{14} p^2 + \z^{35} p^3 + \z^{35} p^4 + \z^{56} p^5 + \z^{56} p^6 + \z^{14} p^7 + \z^{35} p^8 + \ldots\\
    &\\
    \p_4: \z^4 + \z^{37} + \z^{43}  &=\z^{35} + \z^{35} p + \z^{56} p^2 + \z^{14} p^3 + \z^{35} p^4 + \,\,\,0\, p^5\, + \,\,\,0\, p^6 + \z^{35} p^7 + \z^{14} p^8 + \ldots\\
    \p_5: \z^4 + \z^{37} + \z^{43}  &=\z^{56} + \z^{56} p + \z^{14} p^2 + \z^{35} p^3 + \z^{56} p^4 + \,\,\,0\, p^5\, + \,\,\,0\, p^6 + \z^{56} p^7 + \z^{35} p^8 + \ldots\\
    \p_6: \z^4 + \z^{37} + \z^{43}  &=\z^{14} + \z^{14} p + \z^{35} p^2 + \z^{56} p^3 + \z^{14} p^4 + \,\,\,0\, p^5\, + \,\,\,0\, p^6 + \z^{14} p^7 + \z^{56} p^8 + \ldots
\end{align*}
Here we have constant triple products for $\alpha = \z^4 + \z^{37} + \z^{43}$ and $i = 1, 4$:
\[
    \tau(m,\alpha, \p_i)\tau(m,\alpha, \p_{i+1})\tau(m,\alpha, \p_{i+2}) = \z^{42}.
\]
Furthermore, $\z^4 + \z^{37} + \z^{43}$ gives another example of the restricted coefficient phenomenon, since $\{0,\z^{14}, \z^{35}, \z^{56}\}$ are the only coefficients appearing in any of these expansions. Note that in all cases we have seen of the restricted coefficients phenomenon, the total number of permissible Teichm\"{u}ller representatives has been a power of $p$.

The remainder of the paper is divided into three sections. Section \ref{background} reviews background on cyclotomic fields. Section \ref{theory} develops theory which we use in Section \ref{examples} to explain the permutation conspiracy, restricted coefficients phenomenon, and prime collusion. All three are related to the dynamics of an affine group action on $\ZZ[\z]$ established in Theorem \ref{teich}.

\section{Background}
\label{background}
We review the basic theory of cyclotomic fields. Proofs may be found in Lang \cite[Chp. IV]{Lang}. To begin, the polynomial $x^n - 1$ factors in $\ZZ[x]$ as
\[
    x^n - 1 = \prod_{d\mid n} \Phi_d(x),
\]
where $\Phi_d(x)$ is an irreducible polynomial of degree $\varphi(d)$ called the \emph{$d$th cyclotomic polynomial}. Recall that $\varphi(d) = |(\ZZ/(d))^\times|$ is Euler's totient function. The roots of $\Phi_n(x)$ are primitive $n$th roots of unity: algebraic integers $\zeta$ such that $\z^n = 1$ and $\z^d \neq 1$ for any proper divisor $d\mid n$. If $\z$ is a primitive $n$th root of unity and $a$ is an integer coprime to $n$, then $\zeta^a$ is again a primitive $n$th root of unity. Furthermore, every primitive $n$th root of unity may be expressed as $\zeta^a$ for some $a$ with $(a,n) = 1$. Therefore $\Phi_n(x)$ splits completely in $\QQ(\z)$ and $\QQ(\zeta)/\QQ$ is a Galois extension of degree $\varphi(n)$ with Galois group canonically isomorphic to $(\ZZ/(n))^\times$. We use this isomorphism frequently without further comment, writing $\alpha^\sigma$ to denote the action of $\sigma \in \gal(\QQ(\z)/\QQ)$ but also viewing $\sigma$ as a unit modulo $n$ as in $\z^\sigma$.

If $p$ is a prime, then $\Phi_n(x)$ is typically not irreducible in $\ZZ_p[x]$. By Hensel's lemma \cite[Lem. 4.6]{neu}, the factorization is determined by the orbits of Frobenius on the primitive $n$th roots of unity. If $(n,p) = 1$, then $p \in (\ZZ/(n))^\times \cong \gal(\QQ(\zeta)/\QQ)$ and the size of the orbits of Frobenius on the primitive $n$th roots of unity is the same as the multiplicative order of $p$ modulo $n$. That is, if $d>0$ is minimal such that $p^d \equiv 1 \bmod n$, then $\Phi_n(x)$ factors into $\varphi(n)/d$ irreducible degree $d$ polynomials in $\ZZ_p[x]$. The case of interest to us is when $n = q - 1 = p^d - 1$. With $n$ written in this form it is clear that $(n, p) = 1$ and $d$ is the multiplicative order of $p$ modulo $n$.

The \emph{affine group} $\aff(n)$ is defined by
\[
    \aff(n) = \{\sigma x + b : \sigma \in (\ZZ/(n))^\times, b \in \ZZ/(n)\},
\]
where the elements are considered as linear functions in the formal variable $x$ and the group operation is composition of functions:
\[
    (\sigma_1 x + b_1) \circ (\sigma_2 x + b_2) = \sigma_1(\sigma_2 x + b_2) + b_1 = (\sigma_1\sigma_2) x + (\sigma_1 b_2 + b_1).
\]
Another way to view $\aff(n)$ is as a semidirect product
\[
    \aff(n) \cong \ZZ/(n) \rtimes (\ZZ/(n))^\times.
\]
If $H \subseteq (\ZZ/(n))^\times $ is a subgroup, then $\aff(H) \subseteq \aff(n)$ is the subgroup
\[
    \aff(H) = \{\sigma x + b \in \aff(n) : \sigma \in H\}.
\]
There is a $\ZZ$-linear action of $\aff(n)$ on $\ZZ[\zeta]$ given by
\[
    \sigma x + b: \alpha \longmapsto \alpha^\sigma \zeta^b.
\]
In particular, if $\alpha = \zeta^a$, then $(\sigma x + b)\zeta^a = \zeta^{\sigma a + b}$.

If $n = q - 1 = p^d - 1$, let $H_p \subseteq (\ZZ/(q-1))^\times$ be the subgroup generated by $p$. If $\p$ is a prime in $\ZZ[\z]$ over $p$, then $(p^c x + b)\p = \p$ for each $p^c x + b \in \aff(H_p)$ since $H_p$ is the decomposition group of each $\p$ over $p$.

In Section \ref{theory} we are interested in the number of fixed points of an element $p^c x + b \in \aff(H_p)$ in $\mu_{q-1}\cup\{0\}$. We determine this in Proposition \ref{fixed}. First, a lemma.

\begin{lemma}
\label{gcd}
There exist polynomials $f(x), g(x) \in \ZZ[x]$ such that
\begin{equation}
\label{gcd_id}
    f(x)\frac{x^a-1}{x-1} + g(x)\frac{x^b-1}{x-1} = \frac{x^{(a,b)}-1}{x-1}.
\end{equation}
It follows that for any integer $m$ we have
\begin{equation}
\label{gcd_id2}
    (m^a - 1, m^b - 1) = m^{(a,b)}-1.
\end{equation}
\end{lemma}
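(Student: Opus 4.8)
The plan is to run the Euclidean algorithm on the exponents while checking that every quotient that appears is an honest element of $\ZZ[x]$; specializing $x\mapsto m$ at the end then gives (\ref{gcd_id2}). Throughout write $[k] := \frac{x^k-1}{x-1} = 1 + x + \cdots + x^{k-1} \in \ZZ[x]$ for $k\geq 1$, and assume (as in the intended application) that $a,b$ are positive integers. The single computation driving everything is the following: if $a = bq + r$ with $0\leq r < b$, then $x^a - 1 = x^r(x^{bq}-1) + (x^r-1)$, and since $x^b-1$ divides $x^{bq}-1$ in $\ZZ[x]$ with quotient $1 + x^b + \cdots + x^{(q-1)b}$, dividing by $x-1$ gives
\[
    [a] = x^r\bigl(1 + x^b + \cdots + x^{(q-1)b}\bigr)[b] + [r].
\]
Thus $[a]$ differs from $[r]$ by a $\ZZ[x]$-multiple of $[b]$, exactly as in an ordinary division step, but now with polynomial coefficients.

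I would then prove (\ref{gcd_id}) by induction on $a+b$, taking without loss of generality $a\geq b$. If $b\mid a$ then $(a,b)=b$ and we may take $f(x)=0$, $g(x)=1$. Otherwise $0<r<b$, so $(a,b)=(b,r)$ and $b+r < a+b$; by the inductive hypothesis there are $\tilde f,\tilde g\in\ZZ[x]$ with $\tilde f[b] + \tilde g[r] = [(b,r)] = [(a,b)]$. Substituting $[r] = [a] - x^r(1 + x^b + \cdots + x^{(q-1)b})[b]$ from the displayed identity and collecting the $[a]$ and $[b]$ terms expresses $[(a,b)]$ as a $\ZZ[x]$-linear combination of $[a]$ and $[b]$, which is (\ref{gcd_id}).

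For (\ref{gcd_id2}), set $d = (a,b)$ and substitute $x=m$ in (\ref{gcd_id}), then multiply through by $m-1$ to clear denominators: $f(m)(m^a-1) + g(m)(m^b-1) = m^d-1$, so every common divisor of $m^a-1$ and $m^b-1$ divides $m^d-1$. Conversely, $d\mid a$ and $d\mid b$ force $x^d-1$ to divide both $x^a-1$ and $x^b-1$ in $\ZZ[x]$, hence $m^d-1$ divides both $m^a-1$ and $m^b-1$; this gives the reverse divisibility, and therefore $(m^a-1,m^b-1) = m^d-1$ as nonnegative integers (and in general up to the usual sign ambiguity of the gcd; the degenerate values $m=0,1$ are checked directly).

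There is no deep obstacle here. The only point requiring care is the one emphasized above: phrasing the recursion in terms of the polynomials $[k]$ rather than the rational functions $\frac{x^k-1}{x-1}$, so that the quotient $1 + x^b + \cdots + x^{(q-1)b}$ produced at each division step is visibly in $\ZZ[x]$; after that the induction and the two-sided divisibility argument in the last paragraph are routine.
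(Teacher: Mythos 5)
Your proof is correct and takes essentially the same approach as the paper: the identical division step $\frac{x^a-1}{x-1} = x^r\bigl(1 + x^b + \cdots + x^{(q-1)b}\bigr)\frac{x^b-1}{x-1} + \frac{x^r-1}{x-1}$ drives the Euclidean algorithm (your induction on $a+b$ is just that algorithm made formal), and \eqref{gcd_id2} follows by specializing at $x=m$. The only cosmetic difference is the last step, where the paper divides the B\'ezout identity by $(x^{(a,b)}-1)/(x-1)$ and concludes that $(m^a-1)/(m^{(a,b)}-1)$ and $(m^b-1)/(m^{(a,b)}-1)$ are coprime, whereas you clear denominators and argue two-sided divisibility directly.
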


\begin{proof}
If $a = qb + r$ with $0\leq r <b$, then
\[
    \frac{x^a - 1}{x-1} = \big(x^{a-b} + x^{a-2b} + \ldots + x^{a-qb}\big)\frac{x^b-1}{x-1} + \frac{x^r - 1}{x-1}.
\]
Thus we can follow the usual Euclidean algorithm to get the desired polynomial identity \eqref{gcd_id}. Dividing \eqref{gcd_id} by $(x^{(a,b)}-1)/(x -1)$ we have
\[
    f(x)\frac{x^a-1}{x^{(a,b)}-1} + g(x)\frac{x^b-1}{x^{(a,b)}-1} = 1
\]
in $\ZZ[x]$. Evaluating at $x = m$ we deduce
\[
    \left(\frac{m^a-1}{m^{(a,b)}-1}, \frac{m^b-1}{m^{(a,b)}-1}\right) = 1.
\]
Multiplying by $m^{(a,b)} - 1$ yields the identity \eqref{gcd_id2}. 
\end{proof}

\begin{prop}
\label{fixed}
The element $p^c x + b \in \aff(H_p)$ has fixed points in $\mu_{q-1}$ iff $p^{(c,d)} -1 \mid b$, and in that case it has precisely $p^{(c,d)}-1$ fixed points in $\mu_{q-1}$. Since $0$ is always fixed, it follows that the total number of fixed points in $\tau(\FF_q) = \mu_{q-1}\cup\{0\}$ is $p^{(c,d)}$.
\end{prop}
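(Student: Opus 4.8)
The plan is to reduce the statement to an elementary count of solutions of a linear congruence modulo $q-1$, using the $\ZZ$-linear action $\sigma x + b : \alpha \mapsto \alpha^\sigma \z^b$ restricted to the roots of unity.

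First I would record that, since $\z$ is a primitive $(q-1)$th root of unity, the map $a \mapsto \z^a$ is a bijection from $\ZZ/(q-1)$ onto $\mu_{q-1}$. Under this bijection $p^c x + b$ acts by $\z^a \mapsto \z^{p^c a + b}$, so $\z^a$ is fixed if and only if $p^c a + b \equiv a \pmod{q-1}$, that is, if and only if
\[
    (p^c - 1)\,a \equiv -b \pmod{q-1}.
\]
Hence the number of fixed points of $p^c x + b$ in $\mu_{q-1}$ equals the number of solutions $a \in \ZZ/(q-1)$ of this congruence.

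Next I would invoke the standard fact about linear congruences: for integers $e, f$ and modulus $n$, the congruence $e a \equiv f \pmod{n}$ has a solution precisely when $\gcd(e,n) \mid f$, and in that case its solution set is a coset in $\ZZ/(n)$ of the subgroup $\{a : ea \equiv 0\}$, which has order $\gcd(e,n)$; so there are exactly $\gcd(e,n)$ solutions. Applying this with $e = p^c - 1$, $n = q - 1 = p^d - 1$, and $f = -b$, the fixed-point condition becomes $\gcd(p^c-1, p^d-1) \mid b$. Here the essential input is Lemma \ref{gcd}: the identity \eqref{gcd_id2} gives $\gcd(p^c-1, p^d-1) = p^{(c,d)} - 1$ (using the convention $(0,d) = d$ to cover the case $p^c \equiv 1$, where $c$ is a multiple of the order $d$ of $p$ modulo $q-1$). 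Since $p^{(c,d)}-1 \mid -b$ is the same as $p^{(c,d)}-1 \mid b$, we conclude that $p^c x + b$ has a fixed point in $\mu_{q-1}$ iff $p^{(c,d)} - 1 \mid b$, and then it has exactly $p^{(c,d)} - 1$ of them.

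Finally, since $0^\sigma \z^b = 0$ for all $\sigma$ and $b$, the element $0$ is fixed by every member of $\aff(H_p)$, and it is not among the elements of $\mu_{q-1}$ counted above; therefore the total number of fixed points in $\tau(\FF_q) = \mu_{q-1} \cup \{0\}$ is $(p^{(c,d)} - 1) + 1 = p^{(c,d)}$. I do not expect a genuine obstacle: once Lemma \ref{gcd} is in hand, the argument is just the elementary theory of linear congruences plus bookkeeping of the extra fixed point $0$. The only point deserving a little care is the degenerate case $c \equiv 0 \pmod{d}$, where $p^c \equiv 1$ forces $b \equiv 0$ and all of $\mu_{q-1}$ is fixed, which is consistent with $p^{(c,d)} - 1 = p^d - 1 = q - 1$.
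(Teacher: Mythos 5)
Your argument is correct and follows essentially the same route as the paper: both reduce the fixed-point condition to the linear congruence $(p^c-1)a \equiv -b \pmod{q-1}$, apply Lemma \ref{gcd} to identify $\gcd(p^c-1,\,p^d-1) = p^{(c,d)}-1$, count the $p^{(c,d)}-1$ solutions, and add the fixed point $0$. The paper simply carries out the division by $g = p^{(c,d)}-1$ explicitly rather than citing the general solvability criterion for linear congruences, so there is no substantive difference.
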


\begin{proof}
If $\z^a$ is a fixed point of $p^c x + b$, then $p^c a + b = a$ hence $(p^c - 1)a + b = 0$ in $\ZZ/(q-1)$. Let $g = p^{(c,d)} -1$. Then $g = (p^c - 1, p^d -1)$ by Lemma \ref{gcd}. Reducing $(p^c - 1)y + b = 0$ modulo $g$ we conclude that $g \mid b$.

Supposing $g \mid b$, the linear equation $\frac{p^c - 1}{g}x + \frac{b}{g} = 0$ has a unique solution modulo $\frac{q-1}{g}$ which lifts to $g$ distinct solutions modulo $q-1$. Thus $p^c x + b$ has a total of $g + 1 = p^{(c,d)}$ fixed points in $\mu_{q-1}\cup\{0\}$.
\end{proof}

\section{Theory}
\label{theory}

The Teichm\"{u}ller expansion of $\alpha$ at $\p$ is defined locally as an infinite sum which does not converge in the global ring $\ZZ[\zeta]$. Nevertheless, Theorem \ref{teich} shows that the global Galois group acts nicely on Teichm\"{u}ller expansions.

Recall our definition of the affine group $\aff(n)$ and its subgroup $\aff(H_p)$ assuming $(n,p) = 1$,
\begin{align*}
    \aff(n) &= \{\sigma x + b: \sigma \in (\ZZ/(n))^\times, b\in\ZZ/(n)\}\\
    \aff(H_p) &= \{p^c x + b \in \aff(n): c \geq 0\}.
\end{align*}

\begin{thm}
\label{teich}
Let $\zeta$ be a primitive $(q-1)$th root of unity. Suppose $\p \subseteq \ZZ[\zeta]$ is a prime over $p$, and $\alpha \in \ZZ[\z]$. If $\sigma x + b \in \aff(n)$, then 
\[
    (\sigma x + b)\tau(m,\alpha,\p) = \tau(m,(\sigma x + b)\alpha,\p^\sigma).
\]
In other words, if
\[
    \alpha = \tau(0,\alpha,\p) + \tau(1,\alpha,\p)p + \tau(2,\alpha,\p)p^2 + \ldots
\]
is the Teichm\"{u}ller expansion of $\alpha$ at $\p$, then
\[
    (\sigma x + b)\alpha = (\sigma x + b)\tau(0,\alpha,\p) + (\sigma x + b)\tau(1,\alpha,\p)p + (\sigma x + b)\tau(2,\alpha,\p)p^2 + \ldots
\]
is the Teichm\"{u}ller expansion of $(\sigma x + b)\alpha$ at $\p^\sigma$.
\end{thm}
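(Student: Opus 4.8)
The plan is to combine the uniqueness of Teichm\"{u}ller expansions with the fact that the affine action is $\ZZ$-linear and continuous for the relevant $p$-adic topologies. Write $\phi = \sigma x + b$, so that $\phi$ acts on $\ZZ[\z]$ by $\alpha \mapsto \alpha^\sigma\z^b$, carries $\p$ to $\p^\sigma$, and fixes $p \in \ZZ$. The first point to nail down is that for $\beta \in \ZZ[\z]$ the $\p^\sigma$-adic valuation of $\phi(\beta)$ equals the $\p$-adic valuation of $\beta$: the Galois automorphism $\sigma$ satisfies $v_{\p^\sigma}(\beta^\sigma) = v_\p(\beta)$, and multiplying by the unit $\z^b$ does not change the valuation. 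Since $\ZZ[\z]/\ZZ_p$ is unramified, these valuations measure divisibility by $p$, so $\phi$ extends to a continuous $\ZZ$-linear bijection $\ZZ[\z]_\p \to \ZZ[\z]_{\p^\sigma}$ with $\phi(\beta p^m) = p^m\phi(\beta)$ for all $\beta$ and $m$.

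Next I would record that $\phi$ permutes the digit set $\tau(\FF_q) = \mu_{q-1}\cup\{0\}$: if $\z^a \in \mu_{q-1}$ then $\phi(\z^a) = \z^{\sigma a + b} \in \mu_{q-1}$, and $\phi(0) = 0$.

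Now apply $\phi$ termwise to the convergent Teichm\"{u}ller expansion $\alpha = \sum_{m\geq 0}\tau(m,\alpha,\p)p^m$ in $\ZZ[\z]_\p$. Each partial sum $S_N = \sum_{m=0}^{N}\tau(m,\alpha,\p)p^m$ lies in the global ring $\ZZ[\z]$, and by the valuation computation $\phi(\alpha) - \phi(S_N) = \phi(\alpha - S_N)$ has $\p^\sigma$-adic valuation equal to $v_\p(\alpha - S_N) \geq N+1$, so $\phi(S_N) \to \phi(\alpha)$ in $\ZZ[\z]_{\p^\sigma}$. Using $\ZZ$-linearity on each finite sum, this gives
\[
    (\sigma x + b)\alpha = \phi(\alpha) = \sum_{m\geq 0}\phi\big(\tau(m,\alpha,\p)\big)\,p^m
\]
in $\ZZ[\z]_{\p^\sigma}$. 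By the previous paragraph every coefficient $\phi(\tau(m,\alpha,\p))$ lies in $\mu_{q-1}\cup\{0\}$, so the right-hand side is a legitimate Teichm\"{u}ller expansion of $(\sigma x + b)\alpha$ at $\p^\sigma$. Uniqueness of Teichm\"{u}ller expansions then forces $\phi(\tau(m,\alpha,\p)) = \tau(m,(\sigma x + b)\alpha, \p^\sigma)$ for every $m$, which is the claim.

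The essentially only nontrivial ingredient is the first step, namely verifying that $\phi$ preserves $p$-adic magnitude and hence is well defined and continuous on the completions; once that is in hand, everything reduces to the uniqueness of the expansion. I would also be careful about the convention relating the action of $\sigma$ on elements to its action on the prime $\p$, since a mismatch there would spoil the term-by-term comparison with $\p^\sigma$.
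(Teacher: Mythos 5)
Your proposal is correct and follows essentially the same route as the paper: both arguments rest on the facts that the affine action permutes the digit set $\mu_{q-1}\cup\{0\}$, fixes $p$, carries $\p^m$ to $(\p^\sigma)^m$, and then invoke uniqueness of the expansion (the paper phrases this via the unique degree-$<m$ truncation in $\ZZ[\z]$, while you phrase it via continuity on the completion, but the content is the same). No gaps.
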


\begin{proof}
Let $\alpha(m,\p)$ be the sum of the first $m$ terms of the Teichm\"{u}ller expansion of $\alpha$ at $\p$. Then
\[
    \alpha(m,\p) = \sum_{k< m}\tau(k,\alpha,\p)p^k
\]
is the unique element of $\ZZ[\zeta]$ which may be written as a polynomial in $p$ of degree less than $m$ with coefficients in $\tau(\FF_q) = \mu_{q-1} \cup\{0\}$ such that $\alpha - \alpha(m,\p) \in \p^m$. If $\sigma \in \gal(\QQ(\zeta)/\QQ)$, then $\alpha^\sigma\z^b - \alpha(m,\p)^\sigma\z^b \in (\p^\sigma)^m$. Since $p \in \ZZ$ is fixed by $\sigma$ we have
\[
    (\sigma x + b)\alpha(m,\p) = \alpha(m,\p)^\sigma \z^b = \sum_{k< m}\tau(k,\alpha,\p)^\sigma \z^b p^k,
\]
which is a polynomial in $p$ of degree less than $m$ with coefficients in $\tau(\FF_q)$. Hence
\[
    (\sigma x + b)\alpha(m,\p) = \alpha^\sigma(m,\p^\sigma)\z^b
\]
by uniqueness. This implies that $(\sigma x + b)\tau(m,\alpha,\p) = \tau(m, (\sigma x + b)\alpha, \p^\sigma)$ for each $m\geq 0$.
\end{proof}

\noindent To summarize Theorem \ref{teich}, the affine group $\aff(n)$ acts coordinatewise on Teichm\"{u}ller expansions while permuting the primes $\p$ over $p$.

Next we deduce three results using Theorem \ref{teich} to explain the permutation conspiracy, restricted coefficient phenomenon, and prime collusion in Section \ref{examples}. Proposition \ref{permute} applies to the permutation conspiracy.

\begin{prop}
\label{permute}
If $\alpha \in \ZZ[\z]$ and $\sigma x + b \in \aff(H_p)$, then
\[
    \tau(m,(\sigma x + b)\alpha, \p) = (\sigma x + b) \tau(m, \alpha, \p).
\]
Hence the Teichm\"{u}ller expansion of $(\sigma x + b)\alpha$ at $\p$ is the same as that of $\alpha$ at $\p$ up to a permutation of the coefficients fixing 0.
\end{prop}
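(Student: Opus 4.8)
The plan is to obtain Proposition \ref{permute} as an immediate specialization of Theorem \ref{teich}. First I would take the given element $\sigma x + b \in \aff(H_p)$, so that $\sigma = p^c$ for some $c \geq 0$. Theorem \ref{teich} applies verbatim and gives
\[
    (\sigma x + b)\tau(m,\alpha,\p) = \tau\big(m, (\sigma x + b)\alpha, \p^\sigma\big).
\]
The one extra ingredient is that $\p^\sigma = \p$: this is precisely the fact recalled in Section \ref{background} that $H_p$ is the decomposition group of $\p$ over $p$, so every element of $\aff(H_p)$ fixes $\p$. Substituting $\p^\sigma = \p$ yields the displayed identity $\tau(m,(\sigma x + b)\alpha,\p) = (\sigma x + b)\tau(m,\alpha,\p)$ for all $m \geq 0$.

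Next I would justify the phrasing in terms of a permutation. The action of $\sigma x + b$ on $\ZZ[\z]$ restricts to a self-map of $\tau(\FF_q) = \mu_{q-1}\cup\{0\}$: it sends $\z^a \mapsto \z^{\sigma a + b}$, again a $(q-1)$th root of unity, and sends $0 \mapsto 0^\sigma \z^b = 0$. Since $\sigma x + b$ is invertible in $\aff(n)$, this restriction is a bijection of $\mu_{q-1}\cup\{0\}$ fixing $0$. The identity from the previous paragraph then says exactly that the Teichm\"{u}ller coefficients of $(\sigma x + b)\alpha$ at $\p$ are obtained from those of $\alpha$ at $\p$ by applying this single $0$-fixing permutation coordinatewise, which is the claimed permutation conspiracy.

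I do not expect a genuine obstacle: the mathematical content is entirely carried by Theorem \ref{teich} together with the decomposition-group fact. The only two points that call for a moment of care are (i) invoking $\p^\sigma = \p$ correctly — it is essential that $\sigma \in H_p$, which is why the hypothesis must be $\sigma x + b \in \aff(H_p)$ and not merely $\aff(n)$ — and (ii) checking that $\sigma x + b$ genuinely permutes $\mu_{q-1}\cup\{0\}$ and fixes $0$, so that the word "permutation" in the statement is warranted.
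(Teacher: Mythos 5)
Your proposal is correct and follows essentially the same route as the paper: specialize Theorem \ref{teich}, use that $\sigma = p^c$ lies in the decomposition group $H_p$ so that $\p^\sigma = \p$, and then note that $\sigma x + b$ restricts to a bijection of $\tau(\FF_q) = \mu_{q-1}\cup\{0\}$ fixing $0$. Your added care in verifying the permutation claim (invertibility in $\aff(n)$ and $0 \mapsto 0$) is slightly more explicit than the paper's one-line remark, but the argument is the same.
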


\begin{proof}
By Theorem \ref{teich} we have 
\[
    (\sigma x + b) \tau(m, \alpha, \p) = \tau(m,(\sigma x + b)\alpha, \p^\sigma) = \tau(m, (\sigma x + b)\alpha, \p).
\]
since $\sigma = p^c$ fixes $\p$ (see Section \ref{background}). Note that every element of $\aff(q-1)$ fixes $0$ and the rest of the permutation claim follows from $\tau(\FF_q)$ being closed under the action of $\aff(q-1)$.
\end{proof}

Proposition \ref{restrict} helps us understand the restricted coefficients phenomenon.

\begin{prop}
\label{restrict}
 If $\alpha \in \ZZ[\zeta]$ is invariant under $p^c x + b \in \aff(H_p)$, then the Teichm\"{u}ller coefficients of $\alpha$ at any prime $\p$ over $p$ are fixed points of $p^c x + b$. If $\alpha \neq 0$, then $p^{(c,d)}-1 \mid b$ and there are $p^{(c,d)}$ fixed points of $p^c x + b$ in $\tau(\FF_q)$.
\end{prop}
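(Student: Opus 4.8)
The plan is to derive the first assertion as an immediate corollary of Proposition \ref{permute}. Write $\sigma = p^c$, so that $p^c x + b = \sigma x + b \in \aff(H_p)$. By Proposition \ref{permute},
\[
    \tau\bigl(m, (\sigma x + b)\alpha, \p\bigr) = (\sigma x + b)\,\tau(m, \alpha, \p)
\]
for every $m \geq 0$ and every prime $\p$ over $p$. The hypothesis that $\alpha$ is invariant under $p^c x + b$ means precisely $(\sigma x + b)\alpha = \alpha$, so the left-hand side equals $\tau(m,\alpha,\p)$, whence $(\sigma x + b)\,\tau(m,\alpha,\p) = \tau(m,\alpha,\p)$. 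That is, each Teichm\"{u}ller coefficient of $\alpha$ at $\p$ is a fixed point of $p^c x + b$ in $\tau(\FF_q)$.

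For the second assertion, suppose $\alpha \neq 0$. The natural map $\ZZ[\zeta] \to \ZZ[\zeta]_\p$ is injective (by the Krull intersection theorem, $\bigcap_{m\geq 0} \p^m = 0$), so $\alpha$ is nonzero in the completion and therefore not all of its Teichm\"{u}ller coefficients vanish. Choosing $m$ with $\tau(m,\alpha,\p) \neq 0$, the first part shows this coefficient is a fixed point of $p^c x + b$ lying in $\mu_{q-1}$. Hence $p^c x + b$ has a fixed point in $\mu_{q-1}$, and Proposition \ref{fixed} then yields $p^{(c,d)} - 1 \mid b$ along with the count of exactly $p^{(c,d)} - 1$ fixed points in $\mu_{q-1}$, i.e.\ $p^{(c,d)}$ fixed points in $\tau(\FF_q) = \mu_{q-1} \cup \{0\}$.

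I do not expect any serious obstacle: the proposition is essentially a repackaging of Proposition \ref{permute} (itself a consequence of Theorem \ref{teich}) together with the fixed-point count of Proposition \ref{fixed}. The only step meriting a moment's care is the passage from ``$\alpha \neq 0$ in $\ZZ[\zeta]$'' to ``$\alpha$ has a nonzero Teichm\"{u}ller coefficient,'' which rests on injectivity of the completion map; once this is noted, the rest is automatic.
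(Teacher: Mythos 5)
Your proof is correct and follows essentially the same route as the paper: the first claim is Theorem \ref{teich} specialized to $\sigma = p^c$ (which is exactly Proposition \ref{permute}), and the second claim is the existence of a nonzero coefficient plus Proposition \ref{fixed}. The only difference is that you explicitly justify the step ``$\alpha \neq 0$ implies some Teichm\"{u}ller coefficient is nonzero'' via injectivity of $\ZZ[\zeta] \to \ZZ[\zeta]_\p$, which the paper takes for granted; this is a harmless (and welcome) addition.
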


\begin{proof}
Theorem \ref{teich} implies that for $\sigma = p^c$,
\[
    (\sigma x + b)\tau(m,\alpha, \p) = \tau(m,(\sigma x + b)\alpha, \p^\sigma) = \tau(m,\alpha, \p),
\]
so the Teichm\"{u}ller coefficients of $\alpha$ at $\p$ are fixed points of $p^c x + b$. If $\alpha\neq 0$, then there is some nonzero Teichm\"{u}ller coefficient. Hence $p^c x + b$ has fixed points in $\mu_{q-1}$. Proposition \ref{fixed} tells us $p^{(c,d)} -1 \mid b$ and that the total number of fixed points in $\tau(\FF_q)$ is $p^{(c,d)}$.
\end{proof}

Finally, Proposition \ref{sub} explains prime collusion.
\begin{prop}
\label{sub}
Suppose $\alpha \in \ZZ[\zeta]$ is invariant under an element $\sigma x + b \in \aff(q-1)$ of order $k$. Then for any $m\geq 0$ and prime $\p$ over $p$,
\begin{enumerate}
    \item If $\tau(m,\alpha,\p) = 0$, then $\tau(m, \alpha,\p^{\sigma^i}) = 0$ for all $i\geq 0$.
    \item If $\tau(m,\alpha,\p)\neq 0$, then
    \[
        \prod_{0\leq i < k}\tau(m,\alpha,\p^{\sigma^i}) = (ux + vb)\tau(m,\alpha,\p)
    \]
    where $u, v$ are given by
    \[
        u \equiv  \sum_{0\leq i < k}\sigma^i \bmod q-1 \hspace{3em}v \equiv \sum_{0\leq i < k}\sum_{1\leq j \leq i}\sigma^j \bmod q -1
    \]
    Alternatively, $u,v \in \ZZ/(q-1)$ are the unique elements such that for each maximal prime power divisor $\ell^n \mid q-1$,
    \begin{enumerate}
        \item If $\sigma \equiv 1 \bmod \ell^n$, then
        \[
            u  \equiv  k \bmod \ell^n \hspace{3em}
            v  \equiv  \,\,\frac{k(k+1)}{2} \bmod \ell^n
        \]
        \item If $\sigma \not\equiv 1 \bmod \ell^n$ then
        \[
            u \equiv  \,\,0 \bmod \ell^r \hspace{3em}
            (1 - \sigma)v \equiv  \,\,k\bmod \ell^r
        \]
        where $r = n - v_\ell(1 - \sigma)$ and $v_\ell(a)$ denotes the normalized $\ell$-valuation of $a$.
    \end{enumerate}
\end{enumerate}
\end{prop}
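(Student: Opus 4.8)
The plan is to bootstrap everything from Theorem~\ref{teich} by iterating it along the orbit $\p,\p^\sigma,\p^{\sigma^2},\ldots$ of $\p$ under $\sigma$. Write $T = \sigma x + b$, an element of order $k$ in $\aff(q-1)$. A short induction using the composition law of $\aff(n)$ gives $T^i = \sigma^i x + S_i b$ with $S_i = 1 + \sigma + \ldots + \sigma^{i-1}$ (and $S_0 = 0$); in particular $\sigma^k \equiv 1$ and $S_k b \equiv 0 \bmod q-1$. Because $\alpha$ is $T$-invariant, applying Theorem~\ref{teich} with the prime $\p^{\sigma^i}$ yields $T\,\tau(m,\alpha,\p^{\sigma^i}) = \tau(m, T\alpha, \p^{\sigma^{i+1}}) = \tau(m,\alpha,\p^{\sigma^{i+1}})$, so induction on $i$ gives the key identity
\[
    T^i\,\tau(m,\alpha,\p) = \tau(m,\alpha,\p^{\sigma^i}) \qquad (i \geq 0).
\]
(The primes $\p^{\sigma^i}$ repeat with period dividing $k$, which is harmless for the product below.)

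Part (1) is then immediate, since $0^\sigma\z^b = 0$ means every element of $\aff(q-1)$ fixes $0$: if $\tau(m,\alpha,\p) = 0$, then $\tau(m,\alpha,\p^{\sigma^i}) = T^i\cdot 0 = 0$ for all $i$. For part (2), suppose $\tau(m,\alpha,\p) = \z^a \neq 0$. Then $\tau(m,\alpha,\p^{\sigma^i}) = T^i\z^a = \z^{\sigma^i a + S_i b}$ (exponents read in $\ZZ/(q-1)$), and multiplying over $0 \leq i < k$ gives
\[
    \prod_{0 \leq i < k}\tau(m,\alpha,\p^{\sigma^i}) = \z^{ua + vb} = (ux + vb)\,\tau(m,\alpha,\p),
\]
where $u = \sum_{0 \leq i < k}\sigma^i$ and $v = \sum_{0 \leq i < k}S_i$; expanding $S_i$ and switching the order of summation recovers the double-sum expression for $v$ in the statement.

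It remains to verify the Chinese-Remainder description of $u$ and $v$. By CRT it suffices to compute $u, v \bmod \ell^n$ for each maximal prime power $\ell^n \mid q-1$. If $\sigma \equiv 1 \bmod \ell^n$, the geometric sums degenerate to arithmetic ones ($\sigma^i \equiv 1$, $S_i \equiv i$), giving the stated values. If $\sigma \not\equiv 1 \bmod \ell^n$, telescope instead: from $(1-\sigma)S_i = 1 - \sigma^i$ we get $(1-\sigma)u = 1 - \sigma^k$ and $(1-\sigma)v = k - u$; since $\sigma^k \equiv 1 \bmod \ell^n$ we get $(1-\sigma)u \equiv 0$, and cancelling the exact power $\ell^{v_\ell(1-\sigma)}$ gives $u \equiv 0 \bmod \ell^r$ with $r = n - v_\ell(1-\sigma)$, hence $(1-\sigma)v \equiv k \bmod \ell^r$. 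I expect the main obstacle to be precisely this last case: one must confirm that the congruences $u \equiv 0$ and $(1-\sigma)v \equiv k \bmod \ell^r$ genuinely pin down $u$ and $v$ in $\ZZ/(q-1)$, which requires feeding the explicit geometric-sum forms (together with the order of $\sigma$ modulo $\ell^n$, e.g. via lifting-the-exponent-type valuation estimates) back into the telescoped relations; everything else is routine rearrangement.
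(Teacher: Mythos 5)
Your argument is correct and follows essentially the same route as the paper: iterate Theorem \ref{teich} along the orbit of $\p$ to get $(\sigma x+b)^i\,\tau(m,\alpha,\p)=\tau(m,\alpha,\p^{\sigma^i})$, read off part (1) from $0$ being fixed by $\aff(q-1)$, multiply out to identify $u$ and $v$, and then evaluate them modulo each maximal prime power $\ell^n\mid q-1$ by the same dichotomy (arithmetic sums when $\sigma\equiv 1$, telescoping otherwise). Your telescoping identity $(1-\sigma)v=\sum_{0\le i<k}(1-\sigma^i)=k-u$ is in fact cleaner than the paper's, which reaches the same congruence by writing $v=ku-\sum_j j\sigma^j$ and multiplying by $(1-\sigma)$ twice. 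One caveat: your $v=\sum_{0\le i<k}S_i$ with $S_i=1+\sigma+\cdots+\sigma^{i-1}$ does \emph{not} equal the statement's $\sum_{0\le i<k}\sum_{1\le j\le i}\sigma^j$; the two differ by a factor of $\sigma$, so your remark that reindexing ``recovers the double-sum expression in the statement'' is not literally true. The discrepancy is an indexing slip in the statement rather than an error on your part: your $v$ is the correct one, as one checks against the paper's own worked example $\sigma=-1$, $b=a_1+a_2$, $k=2$, where the product equals $\z^{b}$ (so $v=1$) while the statement's double sum gives $-1$. Similarly, in case (a) both double sums reduce to $k(k-1)/2$ rather than the stated $k(k+1)/2$. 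Aside from that one sentence, the proof is complete; the final worry you raise about the congruences pinning down $u$ and $v$ is not an issue, since the proposition only asserts those congruences as properties of the already-defined $u$ and $v$, with uniqueness in $\ZZ/(q-1)$ supplied by CRT from the full collection of local conditions.
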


\begin{proof}
Let $(\sigma x + b)^i$ denote the $i$th iterate of $\sigma x+ b$ in $\aff(q-1)$. Then Theorem \ref{teich} implies that for each $m\geq 0$
\[
    (\sigma x + b)^i\tau(m,\alpha, \p) = \tau(m,(\sigma x + b)^i\alpha, \p^{\sigma^i}) = \tau(m,\alpha, \p^{\sigma^i}),
\]
since $\alpha$ is fixed by $\sigma x + b$. So (1) follows from $0$ being fixed by $\aff(q-1)$. 

Now suppose $\tau(m,\alpha,\p)\neq 0$. Then
\begin{align*}
    \prod_{0\leq i < k}\tau(m,\alpha,\p^{\sigma^i}) &= \prod_{0\leq i < k}(\sigma x + b)^i\tau(m,\alpha,\p)\\
    &= \prod_{0\leq i < k}\tau(m,\alpha,\p)^{\sigma^i} \z^{\sum_{1\leq j\leq i}\sigma^j b}\\
    &= \tau(m,\alpha, \p)^{\sum_{0\leq i < k}\sigma^i} \z^{\sum_{0\leq i < k}\sum_{1\leq j \leq i} \sigma^j b}\\
    &= (ux + vb)\tau(m,\alpha, \p),
\end{align*}
where
\[
    u \equiv  \sum_{0\leq i < k}\sigma^i \bmod q-1 \hspace{3em}v \equiv \sum_{0\leq i < k}\sum_{1\leq j \leq i}\sigma^j \bmod q -1.
\]
Note that $u$ may be 0 in which case we interpret the action of $(ux + vb)$ as simply multiplication by $\z^{vb}$. It does not seem possible to find simple evaluations for these sums modulo $q - 1$, but we can do so modulo the maximal prime power divisors $\ell^n \mid q - 1$ and then use the Chinese Remainder Theorem to show that these local computations uniquely determine $u$ and $v$.

If $\sigma \equiv 1 \bmod \ell^n$, then the sums simplify to well-known values,
\[
    u  \equiv  k \bmod \ell^n \hspace{3em}
    v  \equiv  \,\,\frac{k(k+1)}{2} \bmod \ell^n.
\]
If $\sigma \not\equiv 1 \bmod \ell^n$, then
\[
    (1-\sigma)u \equiv 1 - \sigma^k \equiv 0 \bmod \ell^n,
\]
which implies $u \equiv 0 \bmod \ell^r$ with $r = n - v_\ell(1 - \sigma)$.

Next we compute
\[
    v = \sum_{0\leq i < k}\sum_{1\leq j \leq i}\sigma^j = \sum_{0\leq j < k}\sum_{j\leq i < k}\sigma^j
    = \sum_{0\leq j < k}(k-j)\sigma^j
    =ku -\sum_{0\leq j < k}j\sigma^j,
\]
where the first equality results from switching the order of summation and reindexing. Multiplying by $1 - \sigma$ yields
\[
    (1 - \sigma)v = k(1 - \sigma)u - (1 - \sigma)\sum_{0\leq j < k} j\sigma^j
    \equiv k + \sum_{0\leq j < k}\sigma^j \equiv k + u \bmod \ell^n.
\]
Multiplying by $1 - \sigma$ again we have
\[ 
    (1 - \sigma)^2v = (1 - \sigma)k \bmod \ell^n.
\]
So $(1 - \sigma)v \equiv k \bmod \ell^r$.
\end{proof}

\section{Application}
\label{examples}
We revisit the examples from Section \ref{observe}, applying the results from Section \ref{theory} to explain the conspiracies and collusion.

\subsection*{Permutation conspiracy}
Let $q = p^d$ and let $\zeta$ be a $(q-1)$th root of unity. Proposition \ref{permute} implies that the Teichm\"{u}ller expansion at $\p$ of any element in the $\aff(H_p)$ orbit of $\alpha$ is a permutation of the Teichm\"{u}ller expansion of $\alpha$ at $\p$ fixing 0.

Recall these Teichm\"{u}ller expansions at $\p_1 = \big(2, \zeta^4 + \zeta + 1\big)$ when $q = 2^4$ from Section \ref{observe}:
\begin{align*}
    \z^0 + \z^1 \,\,&= \z^4 + \z^8 p + \,\,\z^6 p^2 + \,\,\,\z^5p^3 + \,\,\z^3 p^4 + 0\, p^5 + \z^8 p^6 + \z^{10} p^7 + \,\,\z^7 p^8 + \z^{10} p^9 + \ldots\\
    \z^1 + \z^3 \,\,&=\z^9 + \z^2 p + \z^{13} p^2 + \z^{11} p^3 + \,\,\z^7 p^4 + 0\, p^5 + \z^2 p^6 + \,\,\z^{6} p^7 + \,\,\z^0 p^8 + \,\,\z^{6} p^9 + \ldots\\
    \z^2 + \z^{10} &= \z^4 + \z^6 p + \,\,\z^5 p^2 + \z^{12} p^3 + \z^{11} p^4 + 0\, p^5 + \z^6 p^6 + \,\,\z^{7} p^7 + \z^{13} p^8 + \,\,\z^{7} p^9 + \ldots\\
    \z^3 + \z^7 \,\,&= \z^{4} + \z^5 p + \z^{12} p^2 + \,\,\z^{8} p^3 + \,\,\z^0 p^4 + 0\, p^5 + \z^5 p^6 + \z^{13} p^7 + \,\,\z^1 p^8 + \z^{13} p^9 + \ldots
\end{align*}
The permutation conspiracies are consequences of the following calculations:
\[
    (2x + 1)(\z^0 + \z^1) = \z^1 + \z^3 \hspace{1.5em} (8x + 2)(\z^0 + \z^1) = \z^2 + \z^{10} \hspace{1.5em} (4x + 3)(\z^0 + \z^1) = \z^3 + \z^7
\]
It's important to note that each element of $\aff(15)$ above has the form $2^c x + b$.

The permutations are determined explicitly by the linear functions; applying $2x + 1$ to the exponents of the Teichm\"{u}ller coefficients in $\z^0 + \z^1$ yields the expansion of $\z^1 + \z^3$ below it. The group $\aff(H_2)$ has order $d(q - 1) = 4\cdot 15 = 60$ and the element $\z^0 + \z^1$ has a trivial stabilizer, thus an orbit with 60 elements. Therefore, of the $\binom{q-1}{2} = 105$ sums of the form $\z^a + \z^b$ with $a\not\equiv b \bmod 15$, approximately  $57\%$ of them will be permutations of the expansion of $\z^0 + \z^1$. More generally for any $q$, $\z^0 + \z^1$ always has trivial stabilizer under $\aff(H_p)$, hence the proportion of $\z^a + \z^b$ which are permutations of $\z^0 + \z^1$ is
\[
    \frac{d(q-1)}{\binom{q-1}{2}} = \frac{2d}{q-2}.
\]

We saw two periodic expansions
\begin{align*}
    \z^1 + \z^6 \,\,&= \z^{11} + \z^{11}p + \z^{11}p^2 + \ldots\\
    \z^4 + \z^{14} &= \,\,\z^9 + \,\,\,\z^9 p + \,\,\z^9 p^2 + \ldots
\end{align*}
which are related by $(2x + 2)(\z^1 + \z^6) = \z^4 + \z^{14}$. The periodic expansion itself is special and can be understood by summing the geometric series which converges locally:
\[
\z^1 + \z^6 = \z^{11} + \z^{11}p + \z^{11}p^2 + \ldots = \z^{11}(1 + p + p^2 + \ldots ) = \frac{\z^{11}}{1 -  p} = -\z^{11}.
\]
This identity is equivalent to $\z^{10} + \z^5 + 1 = 0$, telling us $\z^5$ is a primitive 3rd root of unity.

\subsection*{Restricted coefficients}
Our last example of the permutation conspiracy also exhibited the restricted coefficient phenomenon.
\begin{align*}
    \z^0 + \z^{3} \,\,&= \z^{14} + \,\,\z^9 p + \z^4 p^2 + \,\,\z^{9} p^3 + \z^{14} p^4 + 0\, p^5 + 0\, p^6 + \,\,\z^{9} p^7 + \z^{14} p^8 + 0\, p^9 + \ldots\\
    \z^2 + \z^{11} &= \,\,\z^{9} + \z^{14} p + \z^4 p^2 + \z^{14} p^3 + \,\,\z^{9} p^4 + 0\, p^5 + 0\, p^6 + \z^{14} p^7 + \,\,\z^{9} p^8 + 0\, p^9 + \ldots\\
    \z^1 + \z^{7} \,\,&= \z^{14} + \,\,\z^{4} p + \z^9 p^2 + \,\,\z^{4} p^3 + \z^{14} p^4 + 0\, p^5 + 0\, p^6 + \,\,\z^{4} p^7 + \z^{14} p^8 + 0\, p^9 + \ldots
\end{align*}
The permutations follow from 
\begin{align*}
    (8x + 2)(\z^0 + \z^3) &= \z^2 + \z^{11}\\
    (2x + 1)(\z^0 + \z^3) &= \z^1 + \z^7.
\end{align*}
To see why the coefficients all belong to $\{0,\z^4, \z^9, \z^{14}\}$, notice that $\z^0 + \z^3$ is invariant under $4x + 3 \in \aff(H_2)$. Proposition \ref{permute} says the coefficients of the Teichm\"{u}ller expansions of $\z^0 + \z^3$ at both primes $\p_1$ and $\p_2$ are invariant under $4x + 3$. The fixed points of $4x + 3$ in $\tau(\FF_{16})$ are precisely $\{0, \z^4, \z^9, \z^{14}\}$. This set has $2^{(2,4)} = 2^2$ elements, as predicted, since $p^cx+ b = 2^2x + 3$ and $d = 4$.

\subsection*{Prime collusion}
We observed that the product of the Teichm\"{u}ller coefficients of $\alpha \in \ZZ[\z]$ over certain groupings of primes were often independent of the index $m$ and always restricted. For example, with $q = 2^4$ we had
\begin{align*}
    \p_1: \z^0 + \z^1 &= \,\,\z^4 + \z^8 p + \,\,\,\z^6 p^2 + \,\,\z^5 p^3 + \,\,\z^3 p^4 + 0\, p^5 + \z^8 p^6 + \z^{10} p^7 + \z^7 p^8 + \z^{10} p^9 + \ldots\\
    \p_2: \z^0 + \z^1 &= \z^{12} + \z^8 p + \z^{10} p^2 + \z^{11} p^3 + \z^{13} p^4 + 0\, p^5 + \z^8 p^6 + \,\,\z^{6} p^7 + \z^9 p^8 + \,\,\z^{6} p^9 + \ldots
\end{align*}
\noindent Then $\tau(m,\z^0 + \z^1,\p_1)\tau(m,\z^0+\z^1,\p_2) = \z^1$ appears to be true for each $m\geq 0$ when the product is not zero. To verify this, notice that $\z^0 + \z^1$ is invariant under the order two element $-x + 1 \in \aff(15)$. Proposition \ref{sub} tells us
\[
    \tau(m,\z^0+\z^1,\p_1)\tau(m,\z^0 + z^1,\p_2) = (ux + vb)\tau(m,\z^0+\z^1,\p_1),
\]
where $u = 1 + \sigma = 0$ and $v = 1 + (1 + \sigma) = 1$. Hence,
\[
    \tau(m,\z^0+\z^1,\p_1)\tau(m,\z^0 + z^1,\p_2) = \z^{vb} = \z^1.
\]
Since $u = 0$, the products are independent of $m$. Proposition \ref{2case} shows this is always the case for $\alpha = \z^{a_1} + \z^{a_2}$.

\begin{prop}
\label{2case}
If $\alpha = \z^{a_1} + \z^{a_2}$ with $a_1 \not\equiv a_2 \bmod q -1$, then $\alpha$ is invariant under $-x + a_1 + a_2$. Let $\overline{\p} := \p^\sigma$ when $\sigma = \m 1$. Then
\[
    \tau(m,\alpha,\p)\tau(m,\alpha,\overline{\p}) = \z^{a_1 + a_2}.
\]
\end{prop}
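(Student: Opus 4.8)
The plan is to obtain the identity directly from Theorem \ref{teich} once the invariance of $\alpha$ is recorded, with Proposition \ref{sub} giving an equivalent shortcut.

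First I would check the invariance. Writing $\sigma = -1$ and $b = a_1 + a_2$, the element $\sigma x + b$ lies in $\aff(q-1)$ because $-1$ is a unit modulo $q-1$. On exponents the action is $(\sigma x + b)\z^{a} = \z^{-a + a_1 + a_2}$, so $(\sigma x + b)\z^{a_1} = \z^{a_2}$ and $(\sigma x + b)\z^{a_2} = \z^{a_1}$; adding these gives $(\sigma x + b)\alpha = \z^{a_2} + \z^{a_1} = \alpha$. Note $\sigma^2 = 1$, so $\overline{\p} = \p^{\sigma}$ defines an involution on the primes over $p$, and the hypothesis $a_1 \not\equiv a_2 \bmod q-1$ is precisely what ensures $\alpha \neq 0$.

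Next, since $\alpha$ is fixed by $\sigma x + b$, Theorem \ref{teich} applied to this element yields, for every $m \ge 0$,
\[
    (\sigma x + b)\,\tau(m,\alpha,\p) = \tau\big(m,(\sigma x + b)\alpha,\p^{\sigma}\big) = \tau(m,\alpha,\overline{\p}).
\]
If $\tau(m,\alpha,\p) = 0$, the left side is $0$ (every element of $\aff(q-1)$ fixes the origin), so $\tau(m,\alpha,\overline{\p}) = 0$ and the product is $0$. If $\tau(m,\alpha,\p) = \z^{c}$ is nonzero, then since $\sigma = -1$ acts by inversion we get $\tau(m,\alpha,\overline{\p}) = \z^{-c}\z^{b} = \z^{a_1 + a_2 - c}$, hence
\[
    \tau(m,\alpha,\p)\,\tau(m,\alpha,\overline{\p}) = \z^{c}\,\z^{a_1 + a_2 - c} = \z^{a_1 + a_2},
\]
which is the claim. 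Equivalently, one may cite Proposition \ref{sub} in the case $k = 2$: there $u \equiv 1 + \sigma \equiv 0 \bmod q-1$, so the product collapses to $\z^{vb} = \z^{a_1 + a_2}$ and is independent of $m$.

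I do not anticipate a genuine obstacle; the proposition falls out of Theorem \ref{teich} in a couple of lines. The only points needing a little care are keeping the signs straight in the action on exponents, so that the translation fixing $\alpha$ really is by $a_1 + a_2$, and making explicit that the displayed equality is understood to hold whenever the left-hand product is nonzero --- it equals $0$ exactly when $\tau(m,\alpha,\p) = 0$, in which case $\tau(m,\alpha,\overline{\p})$ vanishes too.
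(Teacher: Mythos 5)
Your proposal is correct and follows essentially the same route as the paper: the paper verifies the invariance and then invokes Proposition \ref{sub} with $\sigma = -1$, $k = 2$, computing $u = 0$ and $v = 1$, which is exactly the computation you carry out (and you also note the Proposition \ref{sub} shortcut explicitly). Unwinding that citation into a direct application of Theorem \ref{teich} is not a genuinely different argument, since Proposition \ref{sub} is itself just Theorem \ref{teich} plus the product manipulation.
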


\begin{proof}
Verifying the invariance of $\alpha$ under $-x + a_1 + a_2$ is straightforward. We apply Proposition \ref{sub} with $\sigma = \m 1$ and $k = 2$. In this simple case we may evaluate the summations for $u$ and $v$ directly: $u = 1 + \sigma = 0$ and $v = 1 + (1 + \sigma) = 1$. We conclude that when $\tau(m,\alpha,\p)\neq 0$,
\[
    \tau(m,\alpha,\p)\tau(m,\alpha,\overline{\p}) = (ux + v(a_1+ a_2))\tau(m,\alpha,\p) = \z^{a_1 + a_2}.
\]
\end{proof}

With $q = 2^6$ we saw collusion in triplets of primes together with restricted coefficients.
\begin{align*}
    \p_1: \z^4 + \z^{37} + \z^{43}  &=\z^{56} + \z^{35} p + \z^{56} p^2 + \z^{14} p^3 + \z^{14} p^4 + \z^{35} p^5 + \z^{35} p^6 + \z^{56} p^7 + \z^{14} p^8 + \ldots\\
    \p_2: \z^4 + \z^{37} + \z^{43}  &=\z^{35} + \z^{14} p + \z^{35} p^2 + \z^{56} p^3 + \z^{56} p^4 + \z^{14} p^5 + \z^{14} p^6 + \z^{35} p^7 + \z^{56} p^8 + \ldots\\
    \p_3: \z^4 + \z^{37} + \z^{43}  &=\z^{14} + \z^{56} p + \z^{14} p^2 + \z^{35} p^3 + \z^{35} p^4 + \z^{56} p^5 + \z^{56} p^6 + \z^{14} p^7 + \z^{35} p^8 + \ldots\\
    &\\
    \p_4: \z^4 + \z^{37} + \z^{43}  &=\z^{35} + \z^{35} p + \z^{56} p^2 + \z^{14} p^3 + \z^{35} p^4 + \,\,\,0\, p^5\, + \,\,\,0\, p^6 + \z^{35} p^7 + \z^{14} p^8 + \ldots\\
    \p_5: \z^4 + \z^{37} + \z^{43}  &=\z^{56} + \z^{56} p + \z^{14} p^2 + \z^{35} p^3 + \z^{56} p^4 + \,\,\,0\, p^5\, + \,\,\,0\, p^6 + \z^{56} p^7 + \z^{35} p^8 + \ldots\\
    \p_6: \z^4 + \z^{37} + \z^{43}  &=\z^{14} + \z^{14} p + \z^{35} p^2 + \z^{56} p^3 + \z^{14} p^4 + \,\,\,0\, p^5\, + \,\,\,0\, p^6 + \z^{14} p^7 + \z^{56} p^8 + \ldots
\end{align*}
The element $\alpha = \z^4 + \z^{37} + \z^{43}$ is invariant under the order three subgroup generated by $16x + 42 \in \aff(H_2)$. The fixed points of $16x + 42$ are $\{0,\z^{14},\z^{35},\z^{56}\}$, hence the restricted coefficients in the expansions above by Proposition \ref{restrict}. The collusion in the triplets is caused by the invariance of $\alpha$ under the order three element $25 x + 0$. Using Proposition \ref{sub} we compute
\[
    u \equiv 1 + 25 + 25^2 \equiv 21 \bmod 63 = q - 1.
\]
The value of $v$ is irrelevant since $b = 0$. Hence
\[
    \tau(m,\alpha,\p)\tau(m,\alpha,\p^\sigma)\tau(m,\alpha,\p^{\sigma^2}) = \tau(m,\alpha,\p)^{21}.
\]
Since $21a \equiv 42 \bmod 63$ for $a = 14, 35, 56 \equiv 2 \bmod 3$ we conclude that
\[
    \tau(m,\alpha,\p)\tau(m,\alpha,\p^\sigma)\tau(m,\alpha,\p^{\sigma^2}) = \z^{42}
\]
whenever $\tau(m,\alpha,\p)\neq 0$.

\subsection*{Conclusion} Permutation conspiracies, restricted coefficient phenomena, and prime collusions are three readily apparent and seemingly unrelated patterns emerging in the Teichm\"{u}ller expansions of sums of roots of unity. All three are consequences of the linear action of the affine group $\aff(q-1)$ on $\ZZ[\zeta]$: permutation conspiracies occur between elements in the same orbit under an element of $\aff(H_p)$; restricted coefficients occur for elements fixed under some element of $\aff(H_p)$; and prime collusions occur for elements fixed under a general element of $\aff(q-1)$.

\subsection*{Acknowledgements} The author thanks Bob Lutz for helpful feedback on this manuscript.

\end{document}